\let\top\intercal
\newtheorem{theorem}{Theorem}
\newtheorem{proposition}[theorem]{Proposition}
\newtheorem{lemma}[theorem]{Lemma}
\theoremstyle{definition}
\newtheorem{example}{Example}
\newtheorem{definition}[theorem]{Definition}
\newcommand{\reals}{\mathbb{R}}
\newcommand{\naturals}{\mathbb{N}}
\newcommand{\cT}{\mathcal{T}}
\newcommand{\cE}{\mathcal{E}}
\newcommand{\cH}{\mathcal{H}}
\newcommand{\cU}{\mathcal{U}}
\newcommand{\dens}[3]{{#1}^{#3}_{#2}}
\newcommand{\StudT}{\operatorname{T}}
\newcommand{\densrel}[3]{{#1}^{#3}_{#2}}
\newcommand{\ocset}{\cU}
\newcommand{\ocvar}{U}
\newcommand{\ocval}{u}
\newcommand{\rocvar}{Y}
\newcommand{\covvar}{Z}
\newcommand{\susset}{\cT}
\newcommand{\susvar}{T}
\newcommand{\susval}{t}
\newcommand{\nullpoint}{\cH_{\delta = \delta_0}}
\newcommand{\nulloneside}{\cH_{\delta \leq \delta_0}}
\DeclareMathOperator{\ex}{\mathbf E}
\DeclareMathOperator*{\exo}{\mathbf E}
\DeclareRobustCommand{\VANDER}[3]{#2}
\journal{SPL: E-values Special Issue}
\begin{document}

\begin{frontmatter}
% title page -------------------------------------------------------------------
\title{Supermartingales for One-Sided Tests: \\ Sufficient Monotone Likelihood Ratios are Sufficient}
\author[1,2]{Peter Gr\"unwald}
\author[1,3]{Wouter M. Koolen}
\affiliation[1]{organization={Centrum Wiskunde \& Informatica}, city={Amsterdam}, country={The Netherlands}}
\affiliation[2]{organization={Leiden University}, city={Leiden}, country={The Netherlands}}
\affiliation[3]{organization={Twente University}, city={Enschede}, country={The Netherlands}}

%\date{\today}

%\maketitle
\begin{abstract}
  The t-statistic is a widely-used scale-invariant statistic for testing the null hypothesis that the mean is zero. Martingale methods enable sequential testing with the t-statistic at every sample size, while controlling the probability of falsely rejecting the null. For one-sided sequential tests, which reject when the t-statistic is too positive, a natural question is whether they also control false rejection when the true mean is negative. We prove that this is the case using monotone likelihood ratios and sufficient statistics. We develop applications to the scale-invariant t-test, the location-invariant $\chi^2$-test and sequential linear regression with nuisance covariates.
\end{abstract}

% %%Graphical abstract
% \begin{graphicalabstract}
% %\includegraphics{grabs}
% \end{graphicalabstract}

%%Research highlights
%\begin{highlights}
%\item The sequential one-sided t-test of positive against zero effect size is in fact a supermartingale against any negative effect size.
%\item Research highlight 2
%\end{highlights}

%% Keywords
%\begin{keyword}
%  Sequential t-test \sep supermartingales \sep e-variables \sep group invariance \sep monotone likelihood ratio
%% keywords here, in the form: keyword \sep keyword

%% PACS codes here, in the form: \PACS code \sep code

%% MSC codes here, in the form: \MSC code \sep code
%% or \MSC[2008] code \sep code (2000 is the default)

%\end{keyword}

\end{frontmatter}

\section{The Problem}
We start by reviewing the setting of the one-sided sequential testing problem. Let
$\ocvar_1, \ocvar_2, \ldots$ be a stochastic process. Each $\ocvar_i$ is a random variable with support $\ocset_i \subset \reals$ and we abbreviate the sequence of outcomes by $\ocvar^n = (\ocvar_1, \ldots, \ocvar_n)$ and its domain by $\ocset^{(n)} = \ocset_1 \times \ldots \times \ocset_n$. The underlying filtration is $(\sigma(\ocvar^n))_{n \in \naturals}$.
Let $\{P_{\delta}: \delta \in \Delta\}$ be a 1-parameter statistical model for such a process, where the parameter domain $\Delta \subseteq \reals$ is an interval. We restrict attention to models in which we are allowed to condition just as in elementary probability. Specifically, we assume that, for all $\delta \in \Delta$, there exists a version of the conditional distribution $P_{\delta}(\ocvar_i = \cdot \mid \ocvar^{i-1})$ such that for all $i$,  for all $\ocval^{i-1} \in \ocset^{(i-1)}$, all $\delta \in \Delta$, the $\dens{P}{\delta}{\ocvar_i}\mid \ocval^{i-1} := P_{\delta}(\ocvar_i = \cdot \mid \ocvar^{i-1}= \ocval^{i-1})$ are mutually absolutely continuous, share the same support $\ocset_i$ and have densities $\dens{f}{\delta}{\ocvar_i}\mid \ocval^{i-1}$ relative to either Lebesgue or counting measure that coincide with their elementary conditional densities \cite[Section 9.6]{Williams91}. By these assumptions, we may fix any element $\delta_0 \in \Delta$ and represent  $P_{\delta}(\ocvar_i = \cdot \mid \ocvar^{i-1})$ by its density $\densrel{p}{\delta}{\ocvar_i}(\ocvar_i \mid \ocvar^{i-1})$  relative to $P_{\delta_0}(\ocvar_i \mid \ocvar^{i-1})$.  Here, following \cite{perez2024estatistics}, we denote the distribution and density of a measurable random quantity $V$ under $P_{\delta}$ by  $\dens{P}{\delta}{V}$ and  $\densrel{p}{\delta}{V}$, respectively. We shall reserve the symbol $p_{\delta}$ for the density of $P_{\delta}$ relative to $P_{\delta_0}$ --- these may also be thought of as likelihood ratios. When using densities relative to Lebesgue or counting measure we will use the symbol $f_{\delta}$ instead.

\paragraph{Simple Null}
We first consider testing a point null $\nullpoint = \{ P_{\delta_0}\}$ against an alternative $P_W$, where $W$ is an arbitrary prior measure on $\Delta$ and $P_W$ has density $\densrel{p}{W}{\ocvar^n}(\ocvar^n) := \int \densrel{p}{\delta}{\ocvar^n}(\ocvar^n) d W(\delta)$. We allow $W$ to be degenerate, i.e.\ put all its mass on a single point.
We may think of $(\densrel{p}{W}{\ocvar^n})_{n \in \naturals}$ as a \emph{likelihood ratio process}.
As is well known, this likelihood ratio process is a \emph{test martingale} (non-negative and starting from $1$) relative to null $\nullpoint = \{P_{\delta_0} \}$ \citep{ramdas2023savi}. Therefore, it handles optional stopping and anytime-validity. Specifically, under any stopping time $\tau$, the test which rejects $\nullpoint$ if $\densrel{p}{W}{\ocvar^{\tau}}(\ocvar^{\tau})\geq \alpha^{-1}$ has Type-I error bounded by $\alpha$.
The (super)-martingale property requires that for all $n$, all $\ocval^{n-1} \in \ocset^{(n-1)}$, the conditional one-step likelihood ratios ${\densrel{p}{W}{\ocvar_n}(\cdot \mid \ocval^{n-1})}$ are \emph{past-conditional e-variables}, i.e.
\begin{equation}\label{eq:testmart}
\ex_{\delta_0}
\left[{\densrel{p}{W}{\ocvar_n}(\ocvar_{n} \mid \ocval^{n-1})} \middle|  \ocval^{n-1} \right] \leq 1.
\end{equation}
This, in turn, readily follows from the standard cancellation argument:
\begin{equation}\label{eq:testmartb}
\ex_{\delta_0}\left[{\densrel{p}{W}{\ocvar_n}(\ocvar_{n} \mid \ocval^{n-1})}\middle| \ocval^{n-1} \right]= \int_\ocval \frac{d \dens{P}{W}{\ocvar_n }|\ocval^{n-1}}{d \dens{P}{\delta_0}{\ocvar_n}|\ocval^{n-1}}(\ocval) d \left( \dens{P}{\delta_0}{\ocvar_n}| \ocval^{n-1} \right)  = 1.
\end{equation}

\paragraph{One-Sided Tests}
Now fix $\delta^+ > \delta_0$ and consider the one-sided null hypothesis
\[\nulloneside := \{P_{\delta}: \delta \in \Delta, \delta \leq \delta_0\}.\]
As alternative we may take either the Bayesian point alternative $\cH_1 = \{P_W\}$ with $W$ a prior on $\{\delta \in \Delta: \delta \geq \delta^+ \}$ or a composite alternative $\cH_1 \subseteq
\{P_{\delta}: \delta \in \Delta, \delta \ge \delta^+\}$.  For simplicity we concentrate on the case with $\cH_1 = \{P_{\delta^+}\}$ for now, returning to the general case in Section~\ref{sec:discussion}.

The question we aim to answer is: \emph{does the likelihood ratio process $(\densrel{p}{\delta^+}{\ocvar^n})_{n \in \naturals}$ still constitute a test supermartingale for the enlarged null hypothesis $\nulloneside$?} In other words, does~\eqref{eq:testmart} still hold with the expectation taken over $P_{\delta}$ instead of $P_{\delta_0}$ (while the $\delta_0$ hidden by the density notation $p_W$ remains fixed)?

In applications, the parameter $\delta$ often represent a notion of effect size and one would then expect that, at least under some further conditions,~\eqref{eq:testmart} still holds in this case  --- indeed, for some special cases such as the test supermartingales appearing in \cite{TurnerLG24} ($k\times 2$ tables) and \cite{TerschurePLG24} (logrank test), the test supermartingale property~\eqref{eq:testmart} was shown to hold. \emph{But how general is this phenomenon?}
Below we first indicate why existing results do not directly tell us either way; Our main result, Lemma~\ref{lem:main}, provides a condition under which the supermartingale property does hold. To state it and earlier results, we need the following property.
\begin{definition}[Monotone Likelihood Ratio Property]\label{def:mlr}
Let $\susvar$ be a random variable with common support $\susset$ under all $P_{\delta}$ with $\delta \in \Delta$. 
We say that $\susvar$ satisfies the \emph{Monotone Likelihood Ratio (MLR) Property} if
for all $\delta_0, \delta^+ \in \Delta$ with $\delta_0 \le \delta^+$, the likelihood ratio $\densrel{p}{\delta^+}{\susvar}(\susval)$ is increasing\footnote{throughout, increasing will be used in the non-strict sense} as a function of $\susval \in \susset$.
\end{definition}
\noindent
\cite[Proposition 3]{GrunwaldHK24} implies the following:
\begin{proposition}\label{prop:monotoneb}
 Suppose that $\susvar$ satisfies the MLR Property and fix $\delta \le \delta_0 \le \delta^+$ with  $\delta, \delta_0,\delta^+ \in \Delta$.
 Then $\ex_\delta[\densrel{p}{\delta^+}{\susvar}(\susvar)] \leq 1$, i.e.\ the likelihood ratio $\densrel{p}{\delta^+}{\susvar}(\susvar)$ is an e-variable for $\nulloneside$.
\end{proposition}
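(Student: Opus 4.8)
The plan is to read off two consequences of the MLR property and combine them. \emph{Fact 1:} the map $\susval \mapsto \densrel{p}{\delta^+}{\susvar}(\susval)$ is increasing on $\susset$. This is just Definition~\ref{def:mlr} applied to the pair $\delta_0 \le \delta^+$, since by our conventions $\densrel{p}{\delta^+}{\susvar}$ denotes the density of $\dens{P}{\delta^+}{\susvar}$ relative to $\dens{P}{\delta_0}{\susvar}$. \emph{Fact 2:} the law of $\susvar$ under $P_\delta$ is first-order stochastically dominated by the law of $\susvar$ under $P_{\delta_0}$, i.e.\ $P_\delta(\susvar > \susval) \le P_{\delta_0}(\susvar > \susval)$ for every $\susval \in \reals$.

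For Fact 2 I would apply the MLR property to the pair $\delta \le \delta_0$: the density $\ell$ of $\dens{P}{\delta_0}{\susvar}$ relative to $\dens{P}{\delta}{\susvar}$ is increasing (and strictly positive on $\susset$, these laws being mutually absolutely continuous with common support, so equivalently $\densrel{p}{\delta}{\susvar}$ is \emph{decreasing}). Then, writing $P_{\delta_0}(\susvar > \susval) - P_\delta(\susvar > \susval) = \int \mathbf 1\{s > \susval\}\,(\ell(s) - 1)\, d\dens{P}{\delta}{\susvar}(s)$ and using that $\int (\ell(s) - 1)\, d\dens{P}{\delta}{\susvar}(s) = 0$ while $\ell - 1$ is increasing (hence single-crossing, from $\le 0$ to $\ge 0$), one checks this integral is $\ge 0$ for every $\susval$; this is the classical ``MLR order $\Rightarrow$ usual stochastic order'' implication, and the degenerate cases where $\ell - 1$ never crosses $0$ are immediate.

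Combining: since $\densrel{p}{\delta^+}{\susvar}(\cdot)$ is a nonnegative increasing function, Fact 2 yields
\[
\ex_\delta\!\left[\densrel{p}{\delta^+}{\susvar}(\susvar)\right] \;\le\; \ex_{\delta_0}\!\left[\densrel{p}{\delta^+}{\susvar}(\susvar)\right],
\]
an inequality we read in $[0,\infty]$. By the cancellation computation of~\eqref{eq:testmartb}, the right-hand side equals $1$, because $\densrel{p}{\delta^+}{\susvar}$ is the density of the probability measure $\dens{P}{\delta^+}{\susvar}$ relative to the probability measure $\dens{P}{\delta_0}{\susvar}$. Hence $\ex_\delta[\densrel{p}{\delta^+}{\susvar}(\susvar)] \le 1$, which is the claim.

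I expect the only step that is not pure bookkeeping to be the ``MLR $\Rightarrow$ usual stochastic order'' passage, with its single-crossing argument and degenerate cases (and the mild point that only the one-sided inequality is needed here, since the dominating expectation is finite and equal to $1$). A variant that avoids quoting stochastic dominance changes measure to the reference law, $\ex_\delta[\densrel{p}{\delta^+}{\susvar}(\susvar)] = \ex_{\delta_0}[\densrel{p}{\delta^+}{\susvar}(\susvar)\,\densrel{p}{\delta}{\susvar}(\susvar)]$, and then applies Chebyshev's correlation inequality under $P_{\delta_0}$ to the increasing factor $\densrel{p}{\delta^+}{\susvar}$ and the decreasing factor $\densrel{p}{\delta}{\susvar}$, each of which has $P_{\delta_0}$-expectation $1$; this likewise produces the bound $1$.
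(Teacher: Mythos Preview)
Your argument is correct and matches the paper's own proof: both observe that the MLR property makes $\densrel{p}{\delta^+}{\susvar}$ increasing and simultaneously (via the classical ``MLR $\Rightarrow$ stochastic order'' implication, which the paper simply cites to \cite{lehmann1986testing} while you sketch the single-crossing argument) makes $\ex_\delta[g(\susvar)]$ increasing in $\delta$ for increasing $g$, then evaluate at $\delta_0$ using cancellation. Your closing Chebyshev-correlation variant is a pleasant alternative not mentioned in the paper, but the main route is the same.
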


\noindent
\ref{appx:proofs} includes a direct proof.
This proposition can and has been used to show that some likelihood ratios $\densrel{p}{\delta^+}{\susvar_n}(\susvar_n)$, at fixed sample size $n$, of statistic $\susvar_n = \susval_n(\ocvar^n)$ set to some fixed function of $\ocvar^n$, provide e-\emph{variables}. Yet, it tells us nothing about whether the sequence $(\densrel{p}{\delta^+}{\susvar_n}(\susvar_n))_{n \in \naturals}$ is a supermartingale.
\begin{example}[t-test]\label{ex:ttest}
Let us illustrate this using the standard anytime-valid t-test as presented by both \cite{GrunwaldHK24} and \cite{perez2024estatistics}. Here $\Delta= \reals$ and $P_{\delta}^{Y^\infty}$ expresses that the data $\rocvar_1, \rocvar_2, \ldots$ are i.i.d.\ $\sim N(\mu,\sigma)$ for some  $(\mu,\sigma)$ such that  $\delta = \mu/\sigma$. Thus $\delta$ is not sufficient to determine the distribution of the data $\rocvar_1, \rocvar_2\ldots$ Yet the sequence of \emph{maximal invariants} \citep{perez2024estatistics} $\ocvar_1, \ocvar_2, \ldots$ with
$
\ocvar_i := \frac{\rocvar_i}{|\rocvar_1|}, 
$
has the same distribution $P_{\delta}^{U^\infty}$ under all $N(\mu,\sigma)$ with given effect size $\mu/\sigma = \delta$. It is invariant to rescaling, e.g.\ changing the unit of measurement of the $Y_i$ (note that the support $\ocset_1 = \{\pm 1\}$ is discrete whereas $\ocset_i = \reals$ for $i > 1$). The standard anytime-valid t-test is defined in terms of the likelihood ratio process $(\densrel{p}{\delta^+}{\ocvar^n})_{n \in \naturals}$ with $P_{\delta}$ as above. By the argument~\eqref{eq:testmartb} above, it provides a test-martingale in the filtration $(\sigma(\ocvar^{n}))_{n \in \naturals}$ relative to the null $\nullpoint$.
Indeed, as 
\cite[Section 4.5]{wang2024anytimev} write for the case $\delta_0=0$ (notation adapted):
\begin{quote}
.... \cite[Corollary 8]{perez2024estatistics} show that at any fixed sample size $n$ the scale-
invariant t-likelihood ratio $\dens{p}{\delta^+}{\ocvar^n}$ is actually an e-value for the larger one-sided null $\cH_{\delta \leq 0}$ if $\delta^+ \geq 0$ $[\ldots]$
meaning that one can non-sequentially test that null using this statistic. However,
it is unclear if this t-likelihood ratio process  is an e-process for $\cH_{\delta \leq 0}$. \emph{We leave this question open for future investigation}.
\end{quote}
...an observation which prompted nervousness among some members of the e-community, who had simply assumed the t-likelihood ratio to be a super-martingale for $\nulloneside$, and now discovered that it was not straightforward to prove.
\end{example}
\paragraph{The Difficulty}
At first sight, the problem might seem easy to solve: if we could show that for every sample size $n$ and past $\ocval^{n-1} \in \ocset^{(n-1)}$,
the \emph{conditional likelihood ratio}
\begin{align}\label{eq:condlr}
    \densrel{p}{\delta^+}{\ocvar_n}(\ocval \mid \ocval^{n-1})
\end{align}
is monotone in the next outcome $\ocval$, then we could still apply Proposition~\ref{prop:monotoneb} above pointwise, for each $n$ and conditional on each $\ocval^{n-1}$, and we would be done. This is how \cite{TurnerLG24,TerschurePLG24} proceed: in their settings, the $\ocvar_i$ are i.i.d.\ making it easy to employ Proposition~\ref{prop:monotoneb} pointwise.

The t-test setting is more complicated, as the maximal invariants $\ocvar_i$ are not i.i.d.  One might be tempted to exploit that the original, uncoarsened data $\rocvar_i$ are i.i.d., but unfortunately the t-likelihood ratio simply is not an e-process relative to the corresponding filtration $(\sigma(\rocvar^n))_{n \in \naturals}$, leading to trouble when optional stopping is desired.
This subtle point is discussed in detail by \cite{perez2024estatistics}. It reflects the more general fact that in group-invariant testing, of which the t-test is merely a very special case,  \emph{even when the raw data are i.i.d.\ we often need to deal with $\ocvar_i$ that are not i.i.d.}. This is further illustrated by all examples in \ref{sec:examples}.
One might then directly set to prove that the conditional likelihood ratio~\eqref{eq:condlr} is monotone in  $\ocvar_n$. However,
\cite{wangpersonal} showed via numerical experiments that for the t-test it is \emph{not}.

So what to do? In Theorem~\ref{thm:main} below we show the following: if for all $n$, there exists a \emph{sufficient statistic} $\susvar_n= \susval_n(\ocvar^n)$ for the model $\dens{P}{\delta}{\ocvar^n}$ such that the MLR Property  holds for $\susvar_n$, then $(\densrel{p}{\delta^+}{\ocvar^n})_{n \in \naturals}$ is a test supermartingale relative to $\nulloneside$ after all.
We will see that within the t-test setting, the  t-statistic provides just such a sufficient statistic. Thus, our result resolves the issue for the t-test likelihood ratio but also, as we will show in \ref{sec:examples}, for several other cases of interest.
\section{The Solution}
Let $\susvar_n$ be a sufficient statistic for model $\{P_{\delta}: \delta \in \Delta \}$ and data $\ocvar^n$. Recall that this means that, with $\dens{f}{\delta}{\susvar_n}$ and  $\dens{f}{\delta}{\ocvar^n}$
the densities of $\dens{P}{\delta}{\susvar_n}$ and  $\dens{P}{\delta}{\ocvar^n}$ relative to some background measures $\nu^{\susvar_n}$ and $\nu^{\ocvar^n}$,  there exist functions $\susval_n: \ocset^{(n)} \rightarrow \reals$ and $q_n: \ocset^{(n)} \rightarrow \reals^+$ such that  $\susvar_n = \susval_n(\ocvar^n)$ and for all $\delta \in \Delta$, all $\ocval^n \in \ocset^{(n)}$, we have
\begin{equation}\label{eq:sufficiency}
\dens{f}{\delta}{\ocvar^n}(\ocval^n)~=~\dens{f}{\delta}{\susvar_n}(\susval_n(\ocval^n) ) \cdot q_n(\ocval^n).
\end{equation}
%In particular, the likelihood ratio simplifies to $\densrel{p}{\delta}{\ocvar^n}(u^n) = \densrel{p}{\delta}{\susvar_n}(t_n(u^n))$, and the conditional likelihood ratio simplifies to $\densrel{p}{\delta}{\ocvar_n}(u_n|u^{n-1})
% = \frac{
%   \densrel{p}{\delta}{\susvar_n}(t_n(u^n))
% }{
%   \densrel{p}{\delta}{\susvar_{n-1}}(t_{n-1}(u^{n-1}))
% }$.
%
With that, we are ready to state our main results, both proved further below:
\begin{lemma}\label{lem:main} Let, for each $n$, $\susvar_n$ be a sufficient statistic  such that the MLR Property (Def.~\ref{def:mlr}) holds with $\susvar$ set to $\susvar_n$.
Then for any $\delta_0, \delta^+ \in \Delta$ with $\delta_0 < \delta^+$, and all $n\geq 1$, for all $\ocval^n \in \ocset^{(n)}$,  we have
\[\densrel{p}{\delta^+}{\ocvar_n}(\ocval_n |\ocval^{n-1})~=~
\densrel{p}{\delta^+}{\susvar_n}(\susval_n(\ocval^n) |\ocval^{n-1}).
\]
and the conditional likelihood ratio
$\densrel{p}{\delta^+}{\susvar_n}(\susval |\ocval^{n-1})$
is increasing in $\susval$.
%\text{\ where\ } \dens{f}{\delta^+}{\ocvar_n}(\ocval |\ocval^{n-1}) := \frac{\dens{p}{\delta^+}{\ocvar_n}(\ocval \mid \ocval^{n-1})}{\dens{p}{\delta_0}{\ocvar_n}(\ocval\mid \ocval^{n-1})}.
\end{lemma}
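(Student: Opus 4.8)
The plan is to reduce the lemma to two elementary facts: that sufficiency forces the joint likelihood ratio to be a function of the sufficient statistic, and that pushing a pair of measures $Q \ll R$ forward along a map preserves the Radon--Nikodym derivative when that derivative already factors through the map. Fix $n$, $\delta_0 < \delta^+$ in $\Delta$, and the past $\ocval^{n-1} \in \ocset^{(n-1)}$.

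First I would rewrite the one-step conditional likelihood ratio in terms of joint ones. Since the conditional densities coincide with the elementary conditional densities, writing $\densrel{p}{\delta^+}{\ocvar^n}$ and $\densrel{p}{\delta^+}{\ocvar^{n-1}}$ for the densities of $P_{\delta^+}$ relative to $P_{\delta_0}$ on $\ocvar^n$ and $\ocvar^{n-1}$,
\[
\densrel{p}{\delta^+}{\ocvar_n}(\ocval_n \mid \ocval^{n-1}) \;=\; \frac{\densrel{p}{\delta^+}{\ocvar^n}(\ocval^n)}{\densrel{p}{\delta^+}{\ocvar^{n-1}}(\ocval^{n-1})} \;=\; c_{\ocval^{n-1}}\,\densrel{p}{\delta^+}{\ocvar^n}(\ocval^n),
\]
where $c_{\ocval^{n-1}} := 1/\densrel{p}{\delta^+}{\ocvar^{n-1}}(\ocval^{n-1}) \in (0,\infty)$ does not depend on $\ocval_n$ (strict positivity using the common-support assumption). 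Next, dividing the $\delta^+$- and $\delta_0$-versions of the sufficiency factorization~\eqref{eq:sufficiency}, the factor $q_n(\ocval^n)$ cancels, so $\densrel{p}{\delta^+}{\ocvar^n}(\ocval^n) = \dens{f}{\delta^+}{\susvar_n}(\susval_n(\ocval^n)) / \dens{f}{\delta_0}{\susvar_n}(\susval_n(\ocval^n)) = \densrel{p}{\delta^+}{\susvar_n}(\susval_n(\ocval^n))$, the marginal likelihood ratio of $\susvar_n$ (the background measures $\nu^{\ocvar^n},\nu^{\susvar_n}$ drop out of both ratios). Hence $\densrel{p}{\delta^+}{\ocvar_n}(\ocval_n \mid \ocval^{n-1}) = c_{\ocval^{n-1}}\,\densrel{p}{\delta^+}{\susvar_n}(\susval_n(\ocval^n))$, and since the MLR Property (Def.~\ref{def:mlr}) makes $\densrel{p}{\delta^+}{\susvar_n}(\cdot)$ increasing on $\susset$, the one-step conditional likelihood ratio is already seen to be an increasing function of $\susval_n(\ocval^n)$.

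It remains to identify this function with the conditional likelihood ratio of $\susvar_n$ itself. For the fixed $\ocval^{n-1}$, the conditional law of $\susvar_n = \susval_n(\ocval^{n-1},\ocvar_n)$ given $\ocvar^{n-1} = \ocval^{n-1}$ is the pushforward of the conditional law of $\ocvar_n$ given $\ocvar^{n-1} = \ocval^{n-1}$ under the map $\ocval_n \mapsto \susval_n(\ocval^{n-1},\ocval_n)$. The previous paragraph shows that the density of $P_{\delta^+}(\ocvar_n \mid \ocval^{n-1})$ relative to $P_{\delta_0}(\ocvar_n \mid \ocval^{n-1})$ equals $h_{\ocval^{n-1}} \circ \susval_n(\ocval^{n-1},\cdot)$ with $h_{\ocval^{n-1}}$ defined by $h_{\ocval^{n-1}}(\susval) := c_{\ocval^{n-1}}\,\densrel{p}{\delta^+}{\susvar_n}(\susval)$; a routine change-of-variables computation ($Q^g(A) = Q(g^{-1}A) = \int_{g^{-1}A}(k\circ g)\,dR = \int_A k\,dR^g$ when $dQ/dR = k\circ g$) then shows the Radon--Nikodym derivative of the pushforwards is $h_{\ocval^{n-1}}$ itself, i.e.\ $\densrel{p}{\delta^+}{\susvar_n}(\susval \mid \ocval^{n-1}) = h_{\ocval^{n-1}}(\susval)$. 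Evaluating at $\susval = \susval_n(\ocval^n)$ gives the displayed identity of the lemma, and $h_{\ocval^{n-1}}$ --- a strictly positive multiple of the MLR-increasing $\densrel{p}{\delta^+}{\susvar_n}(\cdot)$ --- is increasing, which is the second claim.

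The individual steps are short, so the burden is bookkeeping: keeping straight which densities are taken with respect to Lebesgue or counting measure and which with respect to $P_{\delta_0}$, and making precise that the conditional distribution of $\susvar_n$ given the past really is the pushforward used above, so that the change of variables is legitimate (one should also check that these identities hold pointwise for all $\ocval^n \in \ocset^{(n)}$, not merely almost surely, which is where the common-support assumption is used). I expect this last passage --- from ``the conditional likelihood ratio of $\ocvar_n$ is a function of $\susvar_n$'' to ``that function is the conditional likelihood ratio of $\susvar_n$'' --- to be the conceptual crux, since it is precisely where sufficiency, rather than monotonicity of an arbitrary statistic, does the work.
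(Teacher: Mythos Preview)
Your proof is correct and follows essentially the same route as the paper: both reduce the conditional likelihood ratio of $\ocvar_n$ to $\densrel{p}{\delta^+}{\susvar_n}(\susval_n(\ocval^n))/\densrel{p}{\delta^+}{\ocvar^{n-1}}(\ocval^{n-1})$ via sufficiency, then identify this same expression as the conditional likelihood ratio of $\susvar_n$ given $\ocval^{n-1}$. The only cosmetic difference is that the paper justifies the identification by writing $\densrel{p}{\delta^+}{\susvar_n}(\susval\mid\ocval^{n-1})$ as the $P_{\delta_0}$-conditional expectation $\ex_{\delta_0}[\densrel{p}{\delta^+}{\ocvar_n}(\ocvar_n\mid\ocval^{n-1})\mid \ocval^{n-1},\,\susvar_n=\susval]$, which collapses immediately because the integrand is constant on level sets of $\susval_n$, whereas you phrase the same collapse as the pushforward change-of-variables fact ``$dQ/dR=k\circ g\Rightarrow dQ^g/dR^g=k$''.
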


%WMK: check why this is equal and not just proportional to. Oh, because both are likelihood \emph{ratios} and not likelihoods. Duh.

Importantly, the likelihood ratio $
{\densrel{p}{\delta^+}{\ocvar_n}(\ocval \mid \ocval^{n-1})}
$ need \emph{not} be monotone in $\ocval$ and indeed it is not in the t-test setting; this caused the perceived difficulty of the problem. Combining the lemma with Proposition~\ref{prop:monotoneb} gives:
\begin{theorem}\label{thm:main}
Let $(\susvar_n)_{n \in \naturals}$ be a sequence of sufficient statistics satisfying the monotone likelihood ratio property (Def.~\ref{def:mlr}). Then the process $(\prod_{i=1}^n \densrel{p}{\delta}{\susvar_i}(T_i \mid \ocvar^{i-1}))_{n \in \naturals}$ is identical to the likelihood ratio process $(\densrel{p}{\delta}{\ocvar^n})_{n \in \naturals}$ and both are test supermartingales relative to $\nulloneside$.
\end{theorem}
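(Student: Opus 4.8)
The plan is to obtain Theorem~\ref{thm:main} as a short consequence of Lemma~\ref{lem:main}, Proposition~\ref{prop:monotoneb}, and the chain rule for likelihood ratios already exploited in~\eqref{eq:testmartb}; essentially all of the genuine work has been pushed into Lemma~\ref{lem:main}. First I would record the identity of the two processes. The standing assumptions on conditional densities give the telescoping factorisation $\densrel{p}{\delta^+}{\ocvar^n}(\ocval^n) = \prod_{i=1}^n \densrel{p}{\delta^+}{\ocvar_i}(\ocval_i \mid \ocval^{i-1})$ for every $\ocval^n \in \ocset^{(n)}$, the one-step factors being the elementary conditional likelihood ratios. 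Substituting the first assertion of Lemma~\ref{lem:main}, namely $\densrel{p}{\delta^+}{\ocvar_i}(\ocval_i \mid \ocval^{i-1}) = \densrel{p}{\delta^+}{\susvar_i}(\susval_i(\ocval^i) \mid \ocval^{i-1})$, into this product gives $\densrel{p}{\delta^+}{\ocvar^n} = \prod_{i=1}^n \densrel{p}{\delta^+}{\susvar_i}(\susvar_i \mid \ocvar^{i-1})$ pointwise, hence $P_\delta$-a.s.\ for every $\delta$, which is the claimed identification of the likelihood ratio process with the product process.

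Next I would establish the past-conditional e-variable inequality~\eqref{eq:testmart} with the expectation taken under any $P_\delta$ with $\delta \le \delta_0$. Fix $n$ and a past $\ocval^{n-1} \in \ocset^{(n-1)}$, and consider the one-parameter family $\{P_\delta(\susvar_n = \cdot \mid \ocvar^{n-1} = \ocval^{n-1}) : \delta \in \Delta\}$. With $\ocval^{n-1}$ held fixed, $\susvar_n = \susval_n(\ocval^{n-1}, \ocvar_n)$ is a measurable function of $\ocvar_n$ alone, so these conditional laws are pushforwards of the laws $P_\delta(\ocvar_n = \cdot \mid \ocval^{n-1})$, which by assumption are mutually absolutely continuous with common support; since mutual absolute continuity is preserved under pushforward, the conditional family is again a legitimate one-parameter model with a common support and a bona fide likelihood-ratio representation. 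Its likelihood ratios relative to the $\delta_0$-member are exactly $\densrel{p}{\delta^+}{\susvar_n}(\cdot \mid \ocval^{n-1})$, which by the second assertion of Lemma~\ref{lem:main} is increasing; moreover the lemma's monotonicity conclusion holds for every admissible pair, so the conditional family inherits the full MLR Property. Applying Proposition~\ref{prop:monotoneb} to this conditional model (with $\susvar$ set to $\susvar_n$ under the conditional laws and the same $\delta \le \delta_0 \le \delta^+$) yields $\ex_\delta\!\left[ \densrel{p}{\delta^+}{\susvar_n}(\susvar_n \mid \ocval^{n-1}) \mid \ocval^{n-1} \right] \le 1$, and rewriting the integrand via the first assertion of Lemma~\ref{lem:main} turns this into $\ex_\delta\!\left[ \densrel{p}{\delta^+}{\ocvar_n}(\ocvar_n \mid \ocval^{n-1}) \mid \ocval^{n-1} \right] \le 1$, i.e.\ precisely~\eqref{eq:testmart} with $\ex_\delta$ in place of $\ex_{\delta_0}$.

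Finally I would assemble the supermartingale claim. Put $M_n := \densrel{p}{\delta^+}{\ocvar^n}$: it is non-negative, adapted to $(\sigma(\ocvar^n))_{n \in \naturals}$, and $M_0 = 1$; by the chain rule $M_n = M_{n-1}\cdot \densrel{p}{\delta^+}{\ocvar_n}(\ocvar_n \mid \ocvar^{n-1})$ with $M_{n-1}$ being $\sigma(\ocvar^{n-1})$-measurable, so for every $\delta \le \delta_0$ the previous step gives $\ex_\delta[M_n \mid \sigma(\ocvar^{n-1})] \le M_{n-1}$, and iterating gives $\ex_\delta[M_n] \le 1 < \infty$, so $M$ is integrable. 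Hence $M$ is a test supermartingale under every $P_\delta$ with $\delta \le \delta_0$, i.e.\ relative to $\nulloneside$, and by the identification from the first step the product process is the very same process, so it too is a test supermartingale relative to $\nulloneside$. The only genuinely delicate point I anticipate is the conditional application of Proposition~\ref{prop:monotoneb}: one must be sure that, for each fixed past $\ocval^{n-1}$, the induced family of conditional laws of $\susvar_n$ is again a legitimate one-parameter MLR model with a common support, so that a proposition stated for an unconditional model transfers verbatim — this is exactly where the standing regularity assumptions on conditional densities earn their keep.
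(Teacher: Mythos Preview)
Your proposal is correct and follows essentially the same approach as the paper's own proof: use Lemma~\ref{lem:main} to transfer the MLR property to the conditional family of $\susvar_n$ given $\ocval^{n-1}$, apply Proposition~\ref{prop:monotoneb} conditionally to obtain the past-conditional e-variable inequality, and combine with the chain rule and the first claim of Lemma~\ref{lem:main} to identify the two processes. The only differences are cosmetic --- you establish the process identification first rather than last, and you spell out more carefully the regularity points (pushforward preserving mutual absolute continuity, integrability of $M_n$) that the paper leaves implicit or defers to a cited reference.
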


\begin{example}[t-test, Continued]\label{ex:t2}
  Here are three well-known facts: (a) if data $Y_1,Y_2,\ldots$ are sampled i.i.d.\ from a normal $N(\mu,\sigma)$ with effect size $\delta = \mu/\sigma$, then the t-statistic
  \begin{equation}\label{eq:t.stat}
    \susvar_n
    ~=~
    t_n(U^n)
    ~=~ \frac{
      \frac{1}{\sqrt{n}} \sum_{i=1}^n U_i
    }{
      \sqrt{\frac{1}{n-1}\left(\sum_{i=1}^n U_i^2 - \frac{1}{n} (\sum_{i=1}^n U_i)^2\right)}
    }
    \qquad
    \text{with}
    \qquad
    U_i ~=~ \frac{Y_i}{|Y_1|}
    .
  \end{equation}
  at sample size $n$ has a noncentral t-distribution with $\nu := n-1$ degrees of freedom and noncentrality parameter $\lambda:= \sqrt{n} \delta$.
Let us denote the (Lebesgue) density of this distribution by $\dens{f}{\StudT(\nu, \lambda)}{}$. (b) Also, as  easily established, for each $n$, the t-statistic $\susvar_n$ is sufficient for the coarsening $(\ocvar_1, \ldots, \ocvar_n)$, see also \citep[Example 1]{perez2024estatistics}. That is, for each $n$, as an instance of~\eqref{eq:sufficiency}, we can write (with $\lambda_0 = \sqrt{n} \delta_0$ and $\lambda^+ = \sqrt{n} \delta^+$):
\begin{equation}\label{eq:t-test}
\densrel{p}{\delta^+}{\ocvar^n}(\ocval^n) ~=~
\frac{\dens{f}{\StudT(\nu, \lambda^+)}{}(\susval_n(\ocval^n))}{
\dens{f}{\StudT(\nu, \lambda_0)}{}(\susval_n(\ocval^n))}.
\end{equation}
Finally, (c) for fixed $\nu$, the family of noncentral t-distributions has the monotone likelihood ratio property \citep{Kruskal1954TheFunctions}. That is, the ratio
${\dens{f}{\nu, \lambda^+}{}(\susval)}/{
\dens{f}{\nu, \lambda_0}{}(\susval)}$ is increasing in $\susval$ when $\delta^+ \ge \delta_0$.
Taken together, facts (a)--(c) imply via Lemma~\ref{lem:main} and Theorem~\ref{thm:main} that the
likelihood ratio process $(\densrel{p}{\delta^+}{\ocvar^n}(\ocvar^n))_{n \in \naturals}$ is a test supermartingale relative to the one-sided composite null $\nulloneside$, answering \cite{wang2024anytimev}'s question in the affirmative.
\end{example}

A collection of examples, including the $\chi^2$-test, linear regression and label agnostic Bernoulli are presented in \ref{sec:examples}. We now proceed with the proofs of Lemma~\ref{lem:main} and Theorem~\ref{thm:main}. The proofs are embarrassingly simple once one realizes that to establish the supermartingale property one should consider the conditional density of $\susvar_n$ rather than $\ocvar_n$ --- \emph{that} realization is the real contribution of this note.
\begin{proof}[Proof of Lemma~\ref{lem:main}]
  Let's fix $u^{n-1} \in \ocset^{(n-1)}$ throughout the argument. By sufficiency~\eqref{eq:sufficiency}, the likelihood ratio of $u_n \in \ocset_n$ conditioned on $u^{n-1}$ is
  \begin{equation}\label{eq:suffcons}
    p_{\delta_+}^{U^n}(u_n|u^{n-1})
    ~=~
    \frac{
      p_{\delta_+}^{T_n}(t_n(u^n))
    }{
      p_{\delta_+}^{U^{n-1}}(u^{n-1})
    }
    ,
  \end{equation}
  and given $u^{n-1}$ this is a function of $u_n$ only through $t_n(u^n)$. For any possible $t \in \{t_n(u^{n-1}, u_n) \mid u_n \in \ocset_n\}$ the conditional likelihood ratio, using \eqref{eq:suffcons}, equals
  \[
    p_{\delta_+}^{T_n}(t|u^{n-1})
    ~=~
    \ex_{\delta_0}\left[
      p_{\delta_+}^{U_n}(U_n|u^{n-1})
      \middle|
      u^{n-1}, t_n(u^{n-1}, U_n) = t
    \right]
    ~=~
    \frac{
      p_{\delta_+}^{T_n}(t)
    }{
      p_{\delta_+}^{U^{n-1}}(u^{n-1})
    }
    .
  \]
  This is increasing in $\susval$ by assumption, showing the second claim. The first claim follows from one more application of \eqref{eq:suffcons}, i.e.
  \[
    p_{\delta_+}^{T_n}(t_n(u^n)|u^{n-1})
    ~=~
    \frac{
      p_{\delta_+}^{T_n}(t_n(u^n))
    }{
      p_{\delta_+}^{U^{n-1}}(u^{n-1})
    }
    ~=~
    p_{\delta_+}^{U^n}(u_n | u^{n-1})
    .
    \qedhere
  \]
\end{proof}

\begin{proof}[Proof  of Theorem~\ref{thm:main}]
We apply Proposition~\ref{prop:monotoneb} for each $n$ and each $\ocval^{n-1} \in \ocset^{(n-1)}$, with $\susvar$ replaced by $\susvar_n$ and $\densrel{p}{\delta^+}{\susvar}$ replaced by
$\densrel{p}{\delta^+}{\susvar_n} \mid \ocval^{n-1}$.
By Lemma~\ref{lem:main}, the MLR property holds for these \emph{conditional} densities, so the proposition can be applied and gives that for each $n$ and $\ocval^{n-1}$, it holds that $\densrel{p}{\delta+}{\susvar_n} \mid \ocval^{n-1}$ is an e-variable on domain $\ocset^{(n)}$ conditional on past data $\ocvar^{n-1}=\ocval^{n-1}$, i.e.\ for all $\delta \leq \delta_0$, we have:
\begin{equation}
    \ex_\delta\left[\densrel{p}{\delta^+}{\susvar_n}(\susvar_n \mid \ocval^{n-1}) \middle|  \ocval^{n-1}\right] \leq 1
    \end{equation}
and hence the products  $(\prod_{i=1}^n \densrel{p}{\delta}{\susvar_i}(\cdot \mid \ocvar^{i-1}))_{n \in \naturals}$ of these past-conditional e-variables constitute a supermartingale (see Proposition 2 of \cite{GrunwaldHK24}). But from the first claim of Lemma~\ref{lem:main} we know that for all $n$ and $\ocval^{n-1} \in \ocset^{(n-1)}$, we have for each factor in the product:
$\densrel{p}{\delta+}{\susvar_n}(\susvar_n \mid \ocval^{n-1})=
\densrel{p}{\delta+}{\ocvar_n}(\ocvar_n \mid \ocval^{n-1})$.
Since also for each $n$, it holds that $\densrel{p}{\delta}{\ocvar^n}(\ocvar^n)= \prod_{i=1}^n \densrel{p}{\delta}{\ocvar_i}(\ocvar_i \mid \ocvar^{i-1})$, the processes  $(\prod_{i=1}^n \densrel{p}{\delta}{\susvar_i}(\cdot \mid \ocvar^{i-1}))_{n \in \naturals}$  and  $(\densrel{p}{\delta}{\ocvar^n})_{n \in \naturals}$ must coincide.
\end{proof}

\section{Further Discussion and Future Work}
\label{sec:discussion}
\paragraph{Priors on Alternative and Growth-Rate Optimality Considerations}
What if we put a general prior $W$ on $\{\delta \in \Delta: \delta \geq \delta_0\}$ instead of the point-mass on $\delta^+ \ge \delta_0$ that we implicitly used in Lemma~\ref{lem:main} and Theorem~\ref{thm:main} above? Following exactly the same steps as above we see that both results still hold with $\densrel{p}{\delta^+}{\cdot}$ replaced everywhere by $\densrel{p}{W}{\cdot}$ for an arbitrary such prior $W$. What can we say about the quality of the general supermartingale $(\densrel{P}{W}{\ocvar_n})_{n \in \naturals}$, measured in the standard sense of \emph{e-power} or, equivalently, \emph{log-optimality} \citep{ramdas2023savi}?
First, note that for every stopping time $\tau$, we have that $\densrel{p}{W}{\ocvar^{\tau}}$ is an e-variable. Following standard definitions, we call this e-variable GRO (growth-rate optimal) or equivalently \emph{log-optimal} or \emph{num\'eraire} \citep{larsson2024numeraire} for alternative $\{P_W\}$ on sample space $\ocset^{(\tau)}$ against null $\cH_0$ if it achieves
\begin{equation}\label{eq:GRO}
\max_{S \in \cE(\cH_0; \ocvar^{\tau})} \ex_{P_W}\left[ \ln S(\ocvar^{\tau}) \right]
\end{equation}
where $\cE(\cH_0; \ocvar^{\tau})$ is the set of all e-variables  relative to  null hypothesis $\cH_0$,  that can be written as a function of $\ocvar^{\tau}$. We first note that the likelihood ratio $\densrel{p}{W}{\ocvar^{\tau}}$ is clearly GRO (growth-rate-optimal) for $\{P_{W} \}$ against null $\cH_0 := \nullpoint = \{P_{\delta_0}\}$,  i.e.\ it achieves~\eqref{eq:GRO} with
$\cH_0$  replaced by $\nullpoint$. This follows from the general fact that likelihood ratios are growth-rate optimal for simple nulls \citep{GrunwaldHK24,ramdas2023savi}. But it turns out that 
 $\densrel{p}{W}{\ocvar^{\tau}}$ is also GRO (growth-rate-optimal) for $\{P_{W} \}$ against one-sided null  $\cH_0 := \nulloneside$. To see this, note that, for any e-variable $S$, if $S \in \cE(\cH_a; \ocvar^{\tau})$ and $S \in \cE(\cH_b; \ocvar^{\tau})$ with $\cH_a \subset \cH_b$, and $S$ is growth-optimal relative to $\cH_a$, then it must also be growth-optimal relative to $\cH_b$ with the same alternative (the maximum in~\eqref{eq:GRO} is taken over a smaller set if we set $\cH_0 = \cH_b$ rather than $\cH_0 = \cH_a$). Since this holds for \emph{every} stopping time $\tau$, we can in fact infer the much stronger conclusion that the process $(\densrel{p}{W}{\ocvar^n})_{n \in \naturals}$  has the LOAVEV (log-optimal anytime-valid e-value) property for alternative $P_{W}$ against null  $\nulloneside$, meaning that it is growth-optimal in the strongest possible sense \citep{KoolenG21}.
In a similar manner one can show that Theorem~\ref{thm:main} also implies that, for any stopping time $\tau$, we have that 
$\densrel{p}{\delta^+}{\ocvar^{\tau}}$ is
GROW (growth-rate optimal in the worst case) for the one-sided composite alternative $\cH_{\delta \geq \delta^+}$ against $\nulloneside$, i.e.\ it achieves~\eqref{eq:GRO} with $\ex_{\ocvar^{\tau}\sim P_W}$ replaced by $\inf_{\delta \geq \delta^+} \ex_{\ocvar^{\tau}\sim P_{\delta}}$.

\paragraph{Future Work}
In several  classical testing scenarios that are closely related to the ones we present here, the minimal sufficient statistic is multivariate --- a case in point is the setting of Hotelling's $T^2$ test. It would be interesting (but certainly challenging) to see if our arguments can be extended to such cases. 
\section{Acknowledgements}
We thank Aaditya Ramdas and Hongjiang Wang for noticing the issue that was adressed in this paper. Peter Gr\"unwald has been supported by ERC ADG project No 101142168 (FLEX).

\DeclareRobustCommand{\VANDER}[3]{#3}
\bibliographystyle{abbrvnat}
\bibliography{SAVI,master,references}

\newpage
\appendix

\section{Proof for completeness}\label{appx:proofs}
\begin{proof}[Proof of Proposition~\ref{prop:monotoneb}]
It is a well-known fact that the monotone likelihood property implies stochastic dominance \citep{lehmann1986testing}; see \citep{brown1981variation} for general background. That is, for any increasing function $g: \susset \rightarrow \reals$ we must have that
\[\ex_\delta[g(\susvar)]\] is increasing in $\delta\in \Delta$. Applying this with $g(\susval) = \densrel{p}{\delta^+}{\susvar}(\susval)$ (which,  by the assumed MLR property, must itself be increasing), we get
that $h(\delta) := \ex_\delta[\densrel{p}{\delta^+}{\susvar}(\susvar)]$ is increasing in $\delta$. But  we must have $h(\delta_0)=1$ by the standard cancellation argument (as in~\eqref{eq:testmartb}, but without conditioning). It follows that $h(\delta) \leq 1$ if $\delta \leq \delta_0$.
\end{proof}

\section{Further Examples}\label{sec:examples}

\subsection{Variance of interest, translation nuisance ($\chi^2$-test)}
Let us assume that data $Y_1,Y_2,\ldots$ are i.i.d.\ normal $N(\mu, \sigma^2)$ with mean $\mu \in \reals$ and variance $\sigma^2 > 0$. Let us test the null hypothesis that the variance is $\sigma_0$ while the mean is arbitrary, against the alternative that the variance is $\sigma_+ > \sigma_0$ while the mean is arbitrary and treated as nuisance. The mean can be eliminated by coarsening the data to $U_i = Y_i - Y_1$. Then the likelihood ratio is
\begin{equation}\label{eq:chi-test}
  p^{U^n}_{\sigma_+}(u^n)
  ~=~
  \left(\frac{\sigma_0}{\sigma_+}\right)^{n-1} \exp \left(\frac{1}{2} \left(\frac{1}{\sigma_0^2} - \frac{1}{\sigma_+^2}\right) Q_n\right)
\end{equation}
where $Q_n := \sum_{i=1}^n U_i^2 - \frac{1}{n} \left(\sum_{i=1}^n U_i\right)^2$ is the $\chi^2$ statistic, i.e.\ the unnormalised empirical variance among the $U^n$ (or, equivalently, the $Y^n$). We see that $Q_n$ is a sufficient statistic. We also see that the likelihood ratio is increasing in $Q_n$ whenever $\sigma_+ \ge \sigma_0$, thus establishing the MLR property. We may also observe that $\frac{Q_n}{\sigma^2}$ has a (central) chi squared distribution, and write the above as
\[
  p^{U^n}_{\sigma_+}(u^n)
  ~=~
  \frac{
    f_{\chi^2(n-1, \sigma_+^2)}\left(Q_n\right)
  }{
    f_{\chi^2(n-1, \sigma_0^2)} \left(Q_n\right)
  },
\]
where $f_{\chi^2(\nu, s)}$ is the $\chi^2$ density with $\nu$ degrees of freedom and scaling $s>0$. By Theorem~\ref{thm:main}, we conclude that~\eqref{eq:chi-test} is a supermartingale under any generating variance $\sigma^2 \in [0, \sigma_0^2]$. By invariance of \eqref{eq:chi-test} under negating $\sigma_0^2$, $\sigma_+^2$ and $Q_n$ together, we see that when $\sigma_0 > \sigma_+$, \eqref{eq:chi-test} is a supermartingale under any generating variance $\sigma > \sigma_0$.

\subsection{Linear Regression}
We aim to test whether one of the coefficients of a linear
regression model is zero under Gaussian error assumptions.
The observations are of the form $(X_1, \rocvar_1, \covvar_1), \dots, (X_n, \rocvar_n, \covvar_n)$, with scalar
$X_i,\rocvar_i \in \reals$ and vector $\covvar_i\in \reals^{d}$ for each $i$. We consider the linear model given by
\begin{equation*}
  \rocvar_i ~=~ \delta \sigma X_i + \beta^{\top} \covvar_i + \sigma\varepsilon_i,
\end{equation*}
where $\delta \in \reals$, $\beta\in \reals^d$ and $\sigma\in \reals^+$ are the
parameters, and $\varepsilon_1, \dots,\varepsilon_n$ are i.i.d.\ errors with
standard Gaussian distribution $N(0,1)$. The parameter of interest is the effect size $\delta$, while we treat the coefficients $\beta$ and scale $\sigma$ both as nuisance. As such, this linear regression example is a generalisation of the t-test of Examples~\ref{ex:ttest} and~\ref{ex:t2}, which is recovered for $d=0$ (so that $\beta$ and the $Z_i$ trivialise away) and $X_i = 1$.
Based on likelihoods calculated by \cite{BhowmikK07}, \citet[Example 4.2]{perez2024estatistics}
develop a test martingale, which we reproduce in \eqref{eq:linreg} below, in the form of a likelihood ratio for the following testing problem:
\begin{equation}\label{eq:lin_reg_test}
  \cH_0: \delta = \delta_0
  \qquad
  \text{vs}
  \qquad
  \cH_1: \delta = \delta_+.
\end{equation}
A test for ``no effect'' is obtained by taking $\delta_0 = 0$; see also the very closely related treatment by \cite{lindon2024anytimevalidlinearmodelsregression}.
We revisit the construction of the process, with the goal of finding a sufficient statistic with the MLR property, allowing us to show that the resulting likelihood ratio remains a test supermartingale against the larger null hypothesis $\nulloneside$.

We present the development from the perspective of coarsening. To this end, define the vectors $\rocvar^n = (\rocvar_1, \dots, \rocvar_n)^{\top}$ and
$X^n = (X_1, \dots, X_n)^{\top}$, and the $n\times d$ matrix
$\bm{\covvar}_n=[\covvar_1,\dots,\covvar_n]^{\top}$ whose rows are the nuisance covariate vectors $\covvar_1, \dots,\covvar_n$. Let $r_n$ denote the rank of $\bm{\covvar}_n$ (note that $r_n$ increases with $n$, and it typical equals $r_n = \min\{n,d\}$, but may grow more slowly when some $Z_i$ are linearly dependent.) Let us also abbreviate $k := n - r_n$, on which we suppress the dependence on $n$ to reduce clutter. Our filtration will be defined as $(\sigma(M^n))_{n \in \naturals}$ where $M^n = (M_1,\ldots, M_n)$ and
$M_n=(
\frac{\bm{A}_n \rocvar^n}{\|\bm{A}_n \rocvar^n\|},
  X^n, \bm{\covvar}_n)$,
  where $\bm{A}_n$ is an $k \times n$  matrix whose
columns form an orthonormal basis for the orthogonal complement of the column
space of $\bm{\covvar}_n$~\citep{BhowmikK07}.
It follows that $\bm{A}_n \bm{A}_n^\top=\bm{I}_k$ and $\bm{A}_n^\top \bm{A}_n = \bm{I}_n - \bm \covvar_n(\bm \covvar_n^\top\bm \covvar_n)^\dagger\bm \covvar_n^\top$ where we can take ${}^\dagger$ to be any pseudo-inverse, and $\bm{I}_n$ is the $n\times n$ identity matrix.
In order to compute the likelihood ratio for $M_n$, we assume that the mechanism that generates $X^n$ and $\bm{\covvar}_n$ is the same under both
hypotheses, so that we only need to consider the distribution of ${\ocvar}_n = \frac{\bm{A}_n{\rocvar}^n}{\|\bm{A}_n {\rocvar}^n\|} \in S^{k-1} \subseteq \reals^k$ conditionally on $X^n$ and $\bm{\covvar}_n$. For interpretation, observe that $\bm A_n^\top \bm A_n Y^n$ is the residual of least squares regression of the labels $Y^n$ onto the covariates $\bm Z_n$. Then $\bm A_n^\top U_n$ is that residual normalised to unit length. And finally $U_n$ is a compressed representation of that residual where all zero directions are dropped. By construction, the coarsening $U_n$ has distribution solely dependent on $\delta$, and is independent of the nuisance parameters $\beta$ and $\sigma$. Let us characterise that distribution according to $P_{\delta, \beta, \sigma}$. First, before the normalisation to unit length, we have
\[
  \bm A_n Y^n
  ~\sim~
  N\left(
    \delta \sigma b_n,
    \sigma^2 \bm I_k
  \right)
  \qquad
  \text{where}
  \qquad
  b_n ~:=~ \bm{A}_n X^n \in \reals^k
  .
\]
We may further decompose $\bm A_n Y^n$ as a component along $b_n$ and a component orthogonal to $b_n$. To that end, let $\bm P_n = \bm I_k
    -
    \frac{
      b_n b_n^\top
    }{
      b_n^\top b_n
    }
  $ denote the projection on the orthogonal complement of $b_n$. We find that the inner product and orthogonal complement are independent, with
  \[
    \frac{b_n^\top}{\|b_n\|} \frac{\bm A_n Y^n}{\sigma}
    ~\sim~
    N\left(\delta \|b_n\|, 1\right)
    \qquad
    \text{and}
    \qquad
    \bm P_n \frac{\bm A_n Y^n}{\sigma}
    ~\sim~
    % N\left(
    %   \delta \left(\bm I_k
    %   -
    %   \frac{
    %     b_n b_n^\top
    %   }{
    %     b_n^\top b_n
    %   }
    %   \right) b_n,
    %   \left(\bm I_k
    %   -
    %   \frac{
    %     b_n b_n^\top
    %   }{
    %     b_n^\top b_n
    %   }
    %   \right)^2
    % \right)
    % ~=~
    N\left(
      0,
      \bm P_n
    \right)
    .
  \]
  The dependence on the parameter $\delta$ is fully localised to the former. Moreover, the square norm of that latter orthogonal complement satisfies
  \[
    \left\|
      \bm P_n \frac{\bm A_n Y^n}{\sigma}
    \right\|^2
    ~\sim~
    \chi^2_{k-1}
    .
  \]
  Finally, the properly scaled ratio of a unit-variance normal and an independent $\chi^2$ random variable has a non-central Student-$t$ distribution. In our case, the scaled ratio (in which the $\sigma$ dependence cancels)
  \[
    T_n
    ~:=~
    t_n(U_n)
    ~=~
    \frac{
      \frac{b_n^\top}{\|b_n\|} U_n
    }{
      \frac{1}{\sqrt{k-1}}
      \left\|
        \bm P_n U_n
      \right\|
    }
    ~=~
    \frac{
      \frac{b_n^\top}{\|b_n\|} \frac{\bm A_n Y^n}{\sigma}
    }{
      \frac{1}{\sqrt{k-1}}
      \left\|
        \bm P_n \frac{\bm A_n Y^n}{\sigma}
      \right\|
    }
    ~\sim~
    \StudT(k-1, \delta \|b_n\|)
  \]
  has non-central Student-$t$ distribution with $k-1$ degrees of freedom and non-centrality parameter $\delta \|b_n\|$. To complete the story, we argue that indeed $T_n$ is a sufficient statistic. Following calculations by \citet{BhowmikK07}, we find that if we define $\nu = k-1$, and $\lambda = \delta \|b_n\|$, the likelihood of $u_n$ factors as required by \eqref{eq:sufficiency} into
  \[
    f^{{\ocvar}_n}_{\delta}(\ocval_n|X^n, \bm{\covvar}_n)
    ~=~
    q_n(\ocval_n) \cdot \dens{f}{\StudT(\nu,\lambda)}{}(\susval_n(\ocval_n))
  \]
  where the $\delta$-independent density factor is given by
  \[
    q_n(\ocval_n) = \frac{1}{2} \pi^{- \nu/2} \sqrt{\nu} \Gamma(\nu/2)
    \left(1+ \frac{t_n(\ocval_n)^2}{\nu}\right)^{\frac{\nu+1}{2}}
  \]
  and $ \dens{f}{\StudT(\nu, \lambda)}{}$ is the density of a noncentral Student-$t$ random variable with $\nu$ degrees of freedom and noncentrality parameter $\lambda$.
The corresponding likelihood ratio
\begin{equation}\label{eq:linreg}
  p^{U_n}_{\delta_+}(u_n | X^n, \bm{\covvar}_n)
  ~=~
  \frac{
    \dens{f}{\StudT(k-1, \delta_+ \|b_n\|)}{}(\susval_n(\ocval_n))
  }{
    \dens{f}{\StudT(k-1, \delta_0 \|b_n\|)}{}(\susval_n(\ocval_n))
  }
\end{equation}
then forms a test martingale under $\nullpoint$ and  \cite{perez2024estatistics} show that it is of the standard group-invariant form considered in that paper, implying that for each $n$ it provides the growth-optimal and relative growth-optimal e-variable for testing $\nullpoint$ vs $\cH_1$.

As we have identified a 1-dimensional sufficient statistic $\susval_n(\ocval_n)$ with distributions indexed by $\delta$ that satisfy the MLR property (as we already noted in the t-test example), by Theorem~\ref{thm:main}, the process \eqref{eq:linreg} is a test supermartingale for all $\delta < \delta_0$, i.e.\ for the larger null hypothesis $\nulloneside$.

\subsection{Label Agnostic Bernoulli}
The examples so far were based on continuous translation and/or scale groups. Here we consider the finite group of label flips. More precisely, we consider i.i.d.\ Bernoulli$(\theta)$ data $Y_1, Y_2, \ldots$ where we contrast the null $\cH_0 : \theta \in \{\theta_0, 1-\theta_0\}$ with the alternative $\cH_1 : \theta \in \{\theta_+, 1-\theta_+\}$ for fixed Bernoulli parameters $\theta_0$ and $\theta_+$ that we assume w.l.o.g.\ lie above $\frac{1}{2}$. To gain invariance under label flips, we coarsen the data to $U_i = \mathbf 1_{Y_i = Y_1}$. With that notation, a sufficient statistic is the larger, say, of the two outcome counts (in either $Y^n$ or $U^n$), i.e.\ $T_n = \max \{\sum_{i=1}^n U_i, n - \sum_{i=1}^n U_i \}$. In those terms, the joint probability mass function is
\[
  f_\theta(u^n)
  ~=~
  \theta^{{T_n}}
  (1-\theta)^{n-{T_n}}
  +
  (1-\theta)^{{T_n}}
  \theta^{n-{T_n}}
\]
which, being a sum of two i.i.d.\ likelihoods, is not itself i.i.d.
To show that the likelihood ratio
\begin{equation}\label{eq:agnostic.bernoulli}
  p_{\theta_+}(u^n)
  ~=~
  \frac{
    f_{\theta_+}(u^n)
  }{
    f_{\theta_0}(u^n)
  }
  ~=~
  \frac{
  \theta_+^{{T_n}}
  (1-\theta_+)^{n-{T_n}}
  +
  (1-\theta_+)^{{T_n}}
  \theta_+^{n-{T_n}}
}{
  \theta_0^{{T_n}}
  (1-\theta_0)^{n-{T_n}}
  +
  (1-\theta_0)^{{T_n}}
  \theta_0^{n-{T_n}}
  }
\end{equation}
is increasing in the sufficient statistic ${T_n} \ge n/2$, it suffices to show positivity of
\[
  \frac{\partial}{\partial {T_n}}  \ln p_{\theta_+}(u^n)
  ~=~
  \frac{\partial}{\partial {T_n}}
  \int_{\theta_0}^{\theta_+}
  \frac{\partial}{\partial \theta}
  \ln f_{\theta}(u^n)
  d \theta
  ~=~
  \int_{\theta_0}^{\theta_+}
  \frac{\partial^2}{\partial \theta \partial {T_n}}
  \ln f_{\theta}(u^n)
  d \theta
  .
\]
We will do so by showing positivity of the integrand. We have, abbreviating $\phi := \frac{\theta}{1-\theta} \ge 1$,
\[
  \frac{\partial^2}{\partial \theta \partial {T_n}} \ln f_{\theta}(u^n)
  ~=~
  \frac{
    (\phi +1)^2
  }{
    \phi  \left(\phi ^{2 T_n-n} + 1\right)^2
  }
  \left(
    2 (2 T_n-n) \phi^{2 T_n-n} \ln \phi
    + \phi ^{2 (2 T_n - n)}
    - 1
  \right)
\]
which is clearly positive, as $2 T_n \ge n$ and $\phi \ge 1$. All in all, for all $\theta_0, \theta_+ \in [1/2,1]$ with $\theta_0 \le \theta_+$ we have, by Theorem~\ref{thm:main}, that \eqref{eq:agnostic.bernoulli} is a supermartingale under all $\theta \in [1/2,\theta_0]$. Symmetrically, for $\theta_+ \le \theta_0$, \eqref{eq:agnostic.bernoulli} is also a supermartingale under $\theta \in [\theta_0, 1]$.

\section{No Free For All}
The examples in \ref{sec:examples} might suggest that, more generally, we obtain a test supermartingale whenever we take expectations under a `more extreme' distribution than the one appearing in the numerator of the likelihood ratio, e.g.\ with the same mean but lighter tails. The following example shows that this is not the case --- apparently, the  precise functional form of the densities under which we take expectations is essential for our argument.

Let $M_n^\delta$ denote the t-test likelihood ratio process defined in \eqref{eq:t-test} with $\delta_+ = \delta$ and $\delta_0 = 0$. We proved that for $\delta > 0$ this process is a supermartingale under data that are i.i.d.\ $N(\mu,\sigma^2)$ for any $\mu \le 0$ and any $\sigma^2 > 0$. A popular generalisation of Gaussians is the sub-Gaussian class. Hence one may be hopeful that the process $M_n^\delta$ is a supermartingale under any i.i.d.\ sub-Gaussian true distribution with mean zero or below. However, this is \emph{not true}, unfortunately. Given that failure, one may perhaps hope it is an e-process. But even that fails. Here we show that it is not even an e-variable for fixed $n$. To do so we draw $Y_i$ i.i.d.\ Rademacher $\pm 1$. As such, all $Y_i$ are zero-mean sub-Gaussian, even conditionally. However, we claim that for all $n \ge 2$, there is $\delta > 0$ small enough such that
\[
  \exo_{Y^n \stackrel{\text{\tiny i.i.d.}}{\sim} \text{Rad}}[M_n^\delta] ~>~ 1
  .
\]
A Taylor series expansion in/around $\delta=0$ to fourth order (which is the first contributing order, also all odd orders are zero) reveals that
\[
  \exo_{Y^n \stackrel{\text{\tiny i.i.d.}}{\sim} \text{Rad}}[M_n^\delta]
  ~=~
  1 + \frac{n-1}{6} \delta^4 + O(\delta^6)
\]
and so indeed we are in trouble: for $\delta$ close enough to zero the second strictly positive term dominates the remainder, lifting the above $>1$.

\end{document}